\theoremstyle{plain} 
\newtheorem{teo}{Theorem}[section]
\theoremstyle{plain} 
\newtheorem{teor}{Theorem}[section]
\theoremstyle{definition}
\newtheorem{defi}[teo]{Definition}
\theoremstyle{plain} 
\newtheorem{prop}[teo]{Proposition}
\theoremstyle{plain}
\newtheorem{lem}[teo]{Lemma}
\theoremstyle{plain}
\theoremstyle{definition}
\newtheorem{oss}[teo]{Remark}
\theoremstyle{definition}
\theoremstyle{definition}
\newtheorem{ese}[teo]{Example}
\theoremstyle{plain} 
\newtheorem{conj}{Conjecture}[section]
\newcommand*{\sn}{\unlhd \unlhd \ }
\begin{document}

\title{On the number of $p$-elements in a finite group}
\author{Pietro Gheri}
\address{Dipartimento di Matematica e Informatica ``U. Dini'',\newline
Universit\`a degli Studi di Firenze, viale Morgagni 67/a,
50134 Firenze, Italy.}
\email{pigheri@gmail.it}
		
\dedicatory{This work is dedicated to the memory of Carlo Casolo.\\
His knowledge, his curiosity, his humility and his humanity were an example to all of his students and friends.}

\begin{abstract}
In this paper we study the ratio between the number of $p$-elements and the order of a Sylow $p$-subgroup of a finite group $G$. As well known, this ratio is a positive integer and we conjecture that, for every group $G$, it is at least the $(1-\frac{1}{p})$-th power of the number of Sylow $p$-subgroups of $G$. We prove this conjecture if $G$ is $p$-solvable. Moreover, we prove that the conjecture is true in its generality if a somewhat similar condition holds for every almost simple group.
\end{abstract}		
		
\maketitle

\section{Introduction}

Let $G$ be a finite group and $p$ be a prime dividing the order of $G$. Moreover, let
\[
\mathfrak{U}_p(G) = \bigcup_{P \in Syl_p(G)} P,
\]
be the set of $p$-elements of $G$.

A celebrated theorem of F.G.~Frobenius (\cite{frobenius:sylow}) states that if $P$ is a Sylow $p$-subgroup of $G$, then $|P|$ divides $|\mathfrak{U}_p(G)|$. We will call the positive integer $|\mathfrak{U}_p(G)|/|P|$ the \emph{$p$-Frobenius ratio of $G$}. 

The number of $p$-elements of a finite group is a fundamental invariant in finite group theory. Several different proofs of Frobenius' theorem have been given (see, for example, \cite{isaacs:frobenius} and \cite{speyer:frobenius}). Moreover in \cite[Theorem 15.2]{steinberg:endom} it is proven that the $p$-Frobenius ratio in a finite group of Lie type is equal to the size of a Sylow $p$-subgroup.

Nevertheless, it is still unknown if the Frobenius ratio has a combinatorial meaning. 

It is clear that the $p$-Frobenius ratio is $1$ if and only if $G$ contains a normal Sylow $p$-subgroup. In \cite{miller:theoryapplications}, it is proven with a nice and easy argument that if the $p$-Frobenius ratio is not $1$, then it must be greater or equal than $p$.

In this paper, we focus on the search for ``good'' bounds for the $p$-Frobenius ratio in terms of the number $n_p(G)$ of Sylow $p$-subgroups of $G$.

Of course, a trivial upper bound is obtained when every pair of Sylow p-subgroups of G has trivial intersection, so that, given a Sylow $p$-subgroup $P$ of $G$,
\[
\frac{|\mathfrak{U}_p(G)|}{|P|} \leq n_p(G)- \frac{n_p(G)-1}{|P|} \leq n_p(G).
\]

It is not hard to find examples of sequences of groups that show that a lower bound on the $p$-Frobenius ratio cannot be linear in $n_p(G)$. We state the following conjecture.

\begin{conj}
Let $G$ be a finite group, $p$ be a prime dividing $|G|$ and $P$ a Sylow $p$-subgroup of $G$. Then
\begin{equation} \label{conj pfrobratio}
\frac{|\mathfrak{U}_p(G)|}{|P|} \geq n_p(G)^{1-\frac{1}{p}}.
\end{equation}
\end{conj}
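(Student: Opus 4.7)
The plan is to induct on $|G|$, with base case when $G$ has a normal Sylow $p$-subgroup (so both sides of the claimed inequality equal $1$). The first reduction absorbs $K:=O_p(G)$: since $K$ lies in every Sylow $p$-subgroup, the natural map $G\to G/K$ is a bijection on $Syl_p$ (so $n_p(G)=n_p(G/K)$), and for any $p$-element $\bar x\in G/K$ its $K$-coset consists entirely of $p$-elements of $G$, because they all lie in the $p$-group $K\langle x\rangle$. Hence $|\mathfrak{U}_p(G)|=|K|\cdot|\mathfrak{U}_p(G/K)|$ and $|P|=|K|\cdot|P/K|$, so both $|\mathfrak{U}_p(G)|/|P|$ and $n_p(G)$ are preserved under $G\mapsto G/K$, and the conjecture for $G$ follows from that for $G/K$. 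We may therefore assume $O_p(G)=1$.

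For the $p$-solvable case, set $N:=O_{p'}(G)\neq 1$ and $\bar G:=G/N$. A Schur--Zassenhaus analysis inside $PN$ (with $N$ a normal Hall $p'$-subgroup) yields
\[
n_p(G)=n_p(\bar G)\cdot [N:N_N(P)],\qquad |\mathfrak{U}_p(G)|=\sum_{\bar x\in \mathfrak{U}_p(\bar G)}[N:C_N(x_{\bar x})],
\]
where $x_{\bar x}$ denotes any $p$-element lift of $\bar x$, and one uses that the $p$-elements in a coset $xN$ are precisely the $N$-conjugates of $x$ (their order forces them to generate a Sylow of $N\langle x\rangle$, and complements of $N$ there are $N$-conjugate). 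Invoking the inductive hypothesis on $\bar G$ and dividing through by $|P|=|\bar P|$, the conjecture for $G$ reduces to the auxiliary averaged statement
\[
\frac{1}{|\mathfrak{U}_p(\bar G)|}\sum_{\bar x\in\mathfrak{U}_p(\bar G)}[N:C_N(x_{\bar x})]\ \geq\ [N:N_N(P)]^{(p-1)/p},
\]
a quantitative assertion about the coprime action of $\bar P\cong P$ on $N$. I would attack this through the orbit decomposition of $N$ under $P$ (exploiting that non-fixed orbits have $p$-power size, together with information on $N_N(P)$) combined with a convexity inequality on orbit sizes.

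The general case looks distinctly harder. A natural route is to reduce further via the generalized Fitting subgroup $F^*(G)=F(G)E(G)$, peeling off the $p$-solvable layers as above until one arrives at a configuration where $G$ is essentially almost simple with socle a non-abelian simple group $S$ of order divisible by $p$. Here I expect the main obstacle to lie: Steinberg's identity $|\mathfrak{U}_p(G)|/|P|=|P|$ handles groups of Lie type in defining characteristic immediately, but for cross-characteristic Lie type groups, alternating groups, sporadic groups, and their outer automorphism extensions the intersection structure of Sylow $p$-subgroups is far more delicate, and I expect a complete proof would require case-by-case verification via the classification of finite simple groups rather than a single uniform argument.
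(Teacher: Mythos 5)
The statement you are addressing is Conjecture A, which the paper itself leaves open: it proves only the $p$-solvable case (Theorem A) and reduces a sufficient condition to almost simple groups (Theorem B). Your proposal likewise does not settle the general case, so the real comparison is with the $p$-solvable argument. Your reduction to $O_p(G)=1$ is correct, and so are the two identities $n_p(G)=n_p(\bar G)\,[N:N_N(P)]$ and $|\mathfrak{U}_p(G)|=\sum_{\bar x}[N:C_N(x_{\bar x})]$ for $N=O_{p'}(G)$ (the lifting argument via conjugacy of complements in $\langle x\rangle N$ is fine). The gap is that after these reductions the entire content of the theorem is concentrated in your ``auxiliary averaged statement'', for which you offer only a plan (``orbit decomposition \ldots combined with a convexity inequality on orbit sizes''), not a proof. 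The exponent $(p-1)/p$ is exactly where the difficulty lives: the paper obtains it from the Navarro--Rizo Brauer--Wielandt-type identity $|C_N(P)|=\bigl(\prod_{x\in P}|C_N(x)|/|C_N(x^p)|^{1/p}\bigr)^{p/((p-1)|P|)}$, applied to each $p'$-factor of a normal $\{p,p'\}$-series after an AM--GM step over $x\in P$ (made possible by Casolo's subnormalizer formula $|\mathfrak{U}_p(G)|=\sum_{x\in P}|G|/|S_G(x)|$). Elementary orbit counting gives you congruences such as $|N|\equiv|C_N(P)|\pmod p$ and the fact that nontrivial $P$-orbits have $p$-power size, but that information is far too weak to yield a multiplicative bound of the form $\frac{1}{|\mathfrak{U}_p(\bar G)|}\sum_{\bar x}[N:C_N(x_{\bar x})]\ge [N:C_N(P)]^{(p-1)/p}$; you would need to prove or import something of Brauer--Wielandt strength, and as stated your convexity step is an assertion, not an argument.

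A secondary point: for the general case you defer to a CFSG case analysis of almost simple groups, and it is worth noting that the paper does not complete this either --- it instead isolates the stronger condition $\bigl(\prod_{x\in P}\lambda_G(x)\bigr)^{1/|P|}\le n_p(G)^{1/p}$, where $\lambda_G(x)$ is the number of Sylow $p$-subgroups containing $x$, and proves only that verifying this condition reduces to almost simple groups, again using the Navarro--Rizo identity together with the minimal normal subgroup structure. So your overall architecture (quotient by $O_p$, then by $O_{p'}$, then socle analysis) parallels the paper's, but as written the proposal establishes neither the $p$-solvable case nor the general one.
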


We will show in Example \ref{tight frob ratio} that this bound is ``asymptotically tight''. We show that Conjecture \ref{conj pfrobratio} is true for $p$-solvable groups. Namely, we prove the following.

\begin{teor} \label{bound on Omega psolv}
Let $G$ be a finite $p$-solvable group and $P$ be a Sylow $p$-subgroup of $G$. If $n_p(G)$ denotes the number of Sylow $p$-subgroups in $G$, then
\begin{equation} \label{bound p el}
\frac{|\mathfrak{U}_p(G)|}{|P|} \geq n_p(G)^{\frac{p-1}{p}}.
\end{equation}
\end{teor}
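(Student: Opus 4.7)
My plan is to prove the inequality by strong induction on $|G|$, systematically quotienting by the normal subgroups $O_p(G)$ and $O_{p'}(G)$ provided by $p$-solvability. If $G=1$ or $n_p(G)=1$, then $P\trianglelefteq G$, $|\mathfrak{U}_p(G)|=|P|$, and \eqref{bound p el} reduces to $1\geq 1$. Otherwise $G\neq 1$ is $p$-solvable, so any minimal normal subgroup is elementary abelian and hence either a $p$-group or a $p'$-group; this forces $O_p(G)\cdot O_{p'}(G)>1$.

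\emph{Reduction modulo $N:=O_{p'}(G)$.} Suppose $N>1$. Since $\gcd(|N|,p)=1$, the Sylow correspondence along $\pi\colon G\to G/N$ gives $n_p(G)=n_p(G/N)$ and $|PN/N|=|P|$. The projection clearly sends $p$-elements to $p$-elements. Conversely, for $\bar{x}\in\mathfrak{U}_p(G/N)$ let $H=\pi^{-1}(\langle\bar{x}\rangle)$; the Schur--Zassenhaus theorem produces a complement of $N$ in $H$, which is a $p$-subgroup of $G$ mapping isomorphically onto $\langle\bar{x}\rangle$ and therefore contains a $p$-element lift of $\bar{x}$. Hence $|\mathfrak{U}_p(G)|\geq|\mathfrak{U}_p(G/N)|$, so the $p$-Frobenius ratio cannot decrease on passing to $G/N$, and the inductive hypothesis applied to the $p$-solvable group $G/N$ closes this case.

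\emph{Reduction modulo $K:=O_p(G)$.} Suppose $N=1$ but $K>1$. Since $n_p(G)>1$ one has $K<P$, so $G/K$ is a proper quotient, still $p$-solvable, with $n_p(G/K)=n_p(G)$ and $|P/K|=|P|/|K|$. If $\pi(y)$ has $p$-power order in $G/K$, then $\langle y\rangle$ is an extension of the $p$-group $\langle y\rangle\cap K$ by the $p$-group $\langle y\rangle K/K$, hence is itself a $p$-group. Thus the fibre over each $p$-element of $G/K$ is an entire coset of $K$, giving $|\mathfrak{U}_p(G)|=|K|\cdot|\mathfrak{U}_p(G/K)|$ and exact preservation of the $p$-Frobenius ratio. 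The induction hypothesis applied to $G/K$ closes the argument.

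Aside from routine bookkeeping, the proof rests on the two identities/inequalities for $|\mathfrak{U}_p(\cdot)|$ highlighted above, both of which follow from Schur--Zassenhaus together with coprime-order considerations. I do not anticipate serious obstacles: $p$-solvability is used only to guarantee that $O_p(G)\cdot O_{p'}(G)>1$ whenever $G\neq 1$, which is precisely why the argument does not extend verbatim to the general conjecture stated earlier.
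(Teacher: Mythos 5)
Your reduction modulo $K=O_p(G)$ is sound, but the reduction modulo $N=O_{p'}(G)$ contains a fatal error: it is not true that $n_p(G)=n_p(G/N)$ for a normal $p'$-subgroup $N$. The Sylow correspondence does say that every Sylow $p$-subgroup of $G/N$ has the form $PN/N$, but the map $P\mapsto PN/N$ need not be injective; a Frattini argument gives $N_{G/N}(PN/N)=NN_G(P)/N$, whence
\[
n_p(G/N)=\frac{n_p(G)}{[N:N_N(P)]},
\]
which is strictly smaller than $n_p(G)$ whenever $N_N(P)<N$. Already for $G=S_3$, $p=2$, $N=A_3$ one has $n_2(G)=3$ but $n_2(G/N)=1$. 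Worse, in the critical case $O_p(G)=1$ (to which your $O_p$-step correctly reduces), the quotient $G/O_{p'}(G)$ can have a normal Sylow $p$-subgroup while $n_p(G)$ is arbitrarily large --- this happens exactly for the groups $G_n=N\rtimes P$ of Example \ref{tight frob ratio}, where $G_n/N\simeq P$ yields only the trivial bound $1\geq 1$ although the assertion for $G_n$ itself is asymptotically sharp. So your inductive step closes only those cases in which the theorem has no content, and the entire difficulty of the statement sits precisely where the induction gives nothing. (Your surjectivity claim $|\mathfrak{U}_p(G)|\geq|\mathfrak{U}_p(G/N)|$ via Schur--Zassenhaus is correct, but it is the wrong inequality to chase once $n_p$ drops.)

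For comparison, the paper's proof keeps the whole normal $\{p,p'\}$-series in play rather than passing to a single quotient: it writes $|\mathfrak{U}_p(G)|/|P|$ as the arithmetic mean over $x\in P$ of $|G|/|S_G(x)|$ (Lemma \ref{form Omega subnor}, resting on Casolo's subnormalizer formula, Theorem \ref{form subnor theorem}), bounds this below by the geometric mean, factors each term through the $p'$-factors $U/V$ of the series using part \textit{b)} of Theorem \ref{form subnor theorem}, and then applies the Navarro--Rizo formula (Theorem \ref{nav riz}) to the coprime action of $P$ on each $U/V$ to produce the exponent $\frac{p-1}{p}$. Some quantitative input of this kind appears unavoidable: coprime lifting of $p$-elements alone cannot detect the exponent $1-\frac{1}{p}$.
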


Inspired by the proof of Theorem \ref{bound on Omega psolv}, we show that a sufficient condition for Conjecture \ref{conj pfrobratio} to be true in general is that
\begin{equation} \label{conj lambda}
\left( \prod_{x \in P} \lambda_G(x) \right)^{1/|P|} \leq n_p(G)^{\frac{1}{p}},
\end{equation}
where for every $p$-element $x$ of $P$, $\lambda_G(x)$ denotes the number of Sylow $p$-subgroups of $G$ containing $x$.

For this condition we give a reduction to almost simple groups.

\begin{teor} \label{red alm simp}
Inequality (\ref{conj lambda}) holds for every finite group if and only if it holds for every finite almost simple group.
\end{teor}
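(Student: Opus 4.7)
The ``only if'' direction is immediate. For the converse, assume (\ref{conj lambda}) holds for every finite almost simple group. I would argue by induction on $|G|$, showing that a minimal counterexample must itself be almost simple.

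First, I would reduce to $O_p(G)=1$. Set $M:=O_p(G)$. Since $M$ is contained in every Sylow $p$-subgroup of $G$, the map $Q\mapsto Q/M$ is a bijection $\mathrm{Syl}_p(G)\to\mathrm{Syl}_p(G/M)$ and $\lambda_G(x)=\lambda_{G/M}(xM)$ for every $p$-element $x\in P$. Because $\lambda_G$ is constant on cosets of $M$ inside $P$, both $n_p(G)$ and $\bigl(\prod_{x\in P}\lambda_G(x)\bigr)^{1/|P|}$ are preserved under $G\mapsto G/M$; by minimality, $O_p(G)=1$. For $O_{p'}(G)$ the situation is more subtle---the map $Q\mapsto QN/N$ need not be injective---but a combination of Frattini-type arguments with the $p$-solvable bound of Theorem~\ref{bound on Omega psolv} should allow one to further assume the $p$-solvable radical of $G$ is trivial. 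Then $F^{*}(G)=E(G)=S_1\times\cdots\times S_k$ is a direct product of non-abelian simple groups on which $G$ acts transitively, with $C_G(E(G))=1$, and $G$ embeds in $\mathrm{Aut}(S)\wr\mathrm{Sym}(k)$.

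The next task is to reduce $k$ to $1$, so that the almost-simple hypothesis applies. Let $H:=N_G(S_1)$ and $\bar H:=H/C_H(S_1)$, so that $\bar H$ is almost simple with socle $S_1$. I would project a Sylow $p$-subgroup $P$ of $G$ onto a Sylow $p$-subgroup $\bar P$ of $\bar H$ via restriction to a single factor, and exploit the transitive $G$-action on the $k$ components to average $\log\lambda_G(x)$ across the factors and compare it to $\log\lambda_{\bar H}(y)$, deducing (\ref{conj lambda}) for $G$ from (\ref{conj lambda}) for $\bar H$.

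The main obstacle is this last step, because $G$ is not a direct product of the $N_G(S_i)$ but a subgroup of the wreath product $\mathrm{Aut}(S)\wr\mathrm{Sym}(k)$, so a straightforward multiplicativity over factors is unavailable. One must establish a wreath-type inequality of the form $\bigl(\prod_{x\in P}\lambda_G(x)\bigr)^{1/|P|}\cdot n_p(G)^{-1/p}\leq \bigl(\prod_{y\in\bar P}\lambda_{\bar H}(y)\bigr)^{1/|\bar P|}\cdot n_p(\bar H)^{-1/p}$, capturing how the geometric mean of $\lambda$ and the count $n_p$ transform under the diagonal structure imposed by the transitive action. This combinatorial comparison should constitute the technical core of the theorem.
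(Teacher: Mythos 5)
Your opening reductions are sound as far as they go: the ``only if'' direction is indeed trivial, and the reduction to $O_p(G)=1$ is exactly right (it is the paper's Remark \ref{Op 1}). But from that point on the proposal has a genuine gap, and in fact two. First, your passage to ``trivial $p$-solvable radical, hence $F^*(G)=E(G)$'' is only sketched; the difficulty you flag for $O_{p'}(G)$ is real and is not resolved by ``Frattini-type arguments'' alone. The paper gets around it with a specific multiplicativity statement (Lemma \ref{lambda np}): for any normal subgroup $N$ one has $\lambda_G(x)=\lambda_{G/N}(Nx)\,\lambda_{NP}(x)$, which lets one run induction simultaneously on $G/M$ and on $MP$ for each minimal normal subgroup $M$, concluding that a minimal counterexample satisfies $G=MP$ with $M$ a unique, nonsolvable minimal normal subgroup. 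You have no substitute for this lemma, and without it the structural reduction does not close.

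Second, and more seriously, the step you yourself identify as ``the technical core'' --- the wreath-type inequality comparing $\bigl(\prod_{x\in P}\lambda_G(x)\bigr)^{1/|P|}n_p(G)^{-1/p}$ with the corresponding quantity for the almost simple group $\bar H$ --- is precisely what is not proved, and the paper does not prove any such projection-to-one-factor inequality either; it is far from clear that one holds, since $\lambda_G(x)$ for an element $x$ permuting the factors nontrivially is not controlled by data in a single component. The paper's actual argument avoids this comparison entirely: it takes a maximal subgroup $H$ of $P$ containing all the stabilizers $N_P(L_i)$, applies the inductive hypothesis to the proper subgroup $MH<G$ to handle $\prod_{x\in H}\lambda_G(x)$, and for $x\in P\setminus H$ uses the crude orbit bound $\lambda_M(x)\le n_p(L)^{k/p}$ (such $x$ has at most $k/p$ orbits on the factors) together with the Navarro--Rizo formula (Theorem \ref{nav riz}) applied to the coprime action of $P/Q$ on $N_M(Q)/Q$ to control the normalizer ratios $m_{Q\langle x\rangle}$. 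None of these ingredients appears in your outline, so the proposal does not constitute a proof.
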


One of the most important tools used here is the so-called Wielandt's subnormalizer (see Definition \ref{subnormalizer}), which is related to the number of $p$-elements (see Lemma \ref{form Omega subnor}). This connection is mainly due to the works of C.~Casolo on subnormalizers (\cite{casolo:subnor}, \cite{casolo:subnorsolv}). 

Another fundamental tool for the proof of our result is a theorem by G.~Navarro and N.~Rizo, concerning the number of fixed points in a coprime action of a $p$-group.

Throughout the paper $G$ will be a finite group and $p$ a prime dividing $|G|$. Also, for all $x \in G$ we denote with $x^G$ the conjugacy class of $x$ in $G$. 

\section{The p-solvable case}

In this section we prove Theorem \ref{bound on Omega psolv}. 
First of all we introduce the we introduce the concept of subnormalizer, whose definition (see \cite[pag. 238]{lennox:subnormal}) is inspired by the celebrated Wielandt's subnormality criterion, which says that a subgroup $H$ of $G$ is subnormal in $G$ if and only if $H$ is subnormal in $\langle H, g \rangle$ for every $g \in G$. 

\begin{defi} \label{subnormalizer} Let $H$ be a subgroup of $G$. The \textit{subnormalizer of $H$ in $G$}\index{Subnormalizer} is the set
\[
S_G(H) = \lbrace g \in G \ | \ H \sn \langle H,g \rangle \rbrace.
\]
where $\sn$ means ``is subnormal in''.
\end{defi}

A useful link between subnormalizers and the number of p-elements in a finite group (see Lemma \ref{form Omega subnor}) is established by using a beautiful theorem by C.~Casolo In order to state this theorem we introduce some notation. Let $H$ be a $p$-subgroup of $G$ and $P$ be a Sylow $p$-subgroup of $G$. We write $\lambda_G(H)$ for the number of Sylow $p$-subgroups of $G$ containing $H$ and $\alpha_G(H)$ for the number of $G$-conjugates of $H$ contained in $P$ (note that this number does not depend on the Sylow subgroup $P$ we are considering). 
When $H = \langle x \rangle $ is a cyclic subgroup, we simply write $S_G(x)$ and $\lambda_G(x)$, in place of $S_G(\langle x \rangle)$ and $\lambda_G(\langle x \rangle)$. In a similar fashion, we write $\alpha_G(x)$ for the number of $G$-conjugates of the element $x$ contained in $P$. We thus have that 
\begin{equation} \label{alpha x}
\alpha_G(x)=\alpha_G \left( \langle x \rangle \right) |N_G(\langle x \rangle)|/|C_G(x)|.
\end{equation} 

We can now state the aforementioned theorem by C.~Casolo.

\begin{teo}[\cite{casolo:subnor},\cite{casolo:subnorsolv}] \label{form subnor theorem}  Let $H$ be a $p$-subgroup of $G$. Then the following holds and $P$ be a Sylow $p$-subgroup of $G$.
\begin{itemize}
\item[a)] 
\begin{equation*} 
|S_G(H)|=\lambda_G(H) |N_G(P)| = \alpha_G(H) |N_G(H)|.
\end{equation*}
\item[b)] If $G$ is $p$-solvable and $\mathcal{M}$ is the set of all $p'$-factors in a given normal $\lbrace p,p' \rbrace$-series of $G$, then
\[
|S_G(H)|= |P| \prod_{U/V \in \mathcal{M}} \left| C_{U/V} \left( HV/V \right) \right|.
\]
\end{itemize}
\end{teo}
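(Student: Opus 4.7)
My approach to (a) is a double-counting argument combined with induction. The second equality $\lambda_G(H)|N_G(P)|=\alpha_G(H)|N_G(H)|$ is classical: count the pairs $(H',Q)$ with $H'$ a $G$-conjugate of $H$ and $Q\in\mathrm{Syl}_p(G)$ containing $H'$, once by $H'$ giving $(|G|/|N_G(H)|)\lambda_G(H)$, and once by $Q$ giving $(|G|/|N_G(P)|)\alpha_G(H)$, then equate. For the first equality $|S_G(H)|=\lambda_G(H)|N_G(P)|$ it is worth noting that a naive bijection between $S_G(H)$ and the incidence pairs $\{(g,Q):H\leq Q\in\mathrm{Syl}_p(G),\ g\in N_G(Q)\}$ cannot work: already in $G=S_3$ with $p=2$ and $H=1$ the $3$-cycles lie in $S_G(H)$ yet normalize no Sylow, while $g=1$ normalizes all three. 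I would instead induct on $|G|$ via a minimal normal subgroup $N$. If $N$ is a $p$-group, the equivalence $H\sn K \Leftrightarrow H\leq O_p(K)$ yields $S_G(H)=\pi^{-1}(S_{G/N}(HN/N))$ under $\pi:G\to G/N$, and the Sylow data transfer cleanly. If $N$ is a $p'$-group, I would combine three ingredients: the coprime-action identity $N_N(X)=C_N(X)$ for every $p$-subgroup $X\leq G$ (obtained from the semidirect decomposition $XN=N\rtimes X$, in which the condition $n^{-1}xn\in X$ forces $[n,x]=1$); the Frattini identity $N_G(PN)=N\cdot N_G(P)$; and the orbit-counting lemma $|\{n\in N:H\leq Q^n\}|=|C_N(H)|$, valid because $H$ is the unique Sylow $p$-subgroup of $HN$ contained in $Q$. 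Together these give $|S_G(H)|=|C_N(H)|\cdot|S_{G/N}(HN/N)|$ and close the induction.

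Part (b) fits the same inductive scheme, peeling off the bottom term $N$ of the given $\{p,p'\}$-series. When $N$ is a $p$-group, the identification $S_G(H)=\pi^{-1}(S_{G/N}(HN/N))$ makes the formula transparent: the induced series on $G/N$ has the same $p'$-factors and Sylow $p$-order $|P|/|N|$, and multiplication by $|N|$ restores the $|P|$ factor. When $N$ is a $p'$-group, the relation $|S_G(H)|=|C_N(H)|\cdot|S_{G/N}(HN/N)|$ matches the inductive hypothesis for $G/N$ (whose induced series has lost precisely the $p'$-factor $N$) times the missing centralizer $|C_N(H)|$; note that for $V\geq N$ one has $(HN)V/V = HV/V$, so the centralizer factors in the quotient series agree with those in the original.

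The main obstacle is the $p'$-reduction common to both parts: establishing the identity $\lambda_G(H)|C_N(P)|=|C_N(H)|\lambda_{G/N}(HN/N)$. I would prove it by tracking the $N$-orbits on $\{Q\in\mathrm{Syl}_p(G):H\leq Q\}$, using that Sylow $p$-subgroups of $G/N$ correspond exactly to $N$-orbits on $\mathrm{Syl}_p(G)$, that $|C_N(Q)|=|C_N(P)|$ is constant across $G$-conjugates, and that the orbit-counting lemma forces each orbit intersecting the set to contain exactly $|C_N(H)|/|C_N(P)|$ members meeting our condition. Once this coprime-action bookkeeping is in place, both the first equality of (a) and the product formula of (b) fall out by essentially formal manipulation; the real content lies in proving $N_N(X)=C_N(X)$ and the orbit lemma, which are the points where the coprime action of the $p$-group $H$ on the $p'$-normal subgroup $N$ is genuinely used.
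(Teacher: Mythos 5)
This theorem is quoted from Casolo's two papers and the present paper contains no proof of it, so your proposal can only be judged on its own merits. Several pieces are correct: the double count of pairs $(H',Q)$ giving $\lambda_G(H)|N_G(P)|=\alpha_G(H)|N_G(H)|$; the $p$-group step $S_G(H)=\pi^{-1}(S_{G/N}(HN/N))$ (valid via the criterion that a subnormal $p$-subgroup lies in $O_p$); and the Sylow-theoretic identity $\lambda_G(H)\,|C_N(P)|=|C_N(H)|\,\lambda_{G/N}(HN/N)$ for a normal $p'$-subgroup $N$, which does follow from $N_N(Q)=C_N(Q)$ and your orbit lemma. The first genuine gap is the assertion that your three ingredients ``together give'' $|S_G(H)|=|C_N(H)|\cdot|S_{G/N}(HN/N)|$. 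All three ingredients are statements about Sylow subgroups and their normalizers; none touches the subnormality condition defining $S_G(H)$. What is actually needed is (i) the characterization that $x\in S_G(H)$ if and only if $\bar x\in S_{\bar G}(\bar H)$ and $N\cap\langle H,x\rangle\le C_N(H)$ (this part is provable: one direction uses $[O_p(\langle H,x\rangle),\,N\cap\langle H,x\rangle]=1$, the other uses $HN\cap\langle H,x\rangle=H(N\cap\langle H,x\rangle)$), and then (ii) the count that each coset $gN$ with $\bar g\in S_{\bar G}(\bar H)$ contains exactly $|C_N(H)|$ elements $x$ with $N\cap\langle H,x\rangle\le C_N(H)$. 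Point (ii) is the heart of the theorem and does not follow from your orbit lemma, which counts the $n\in N$ with $H\le Q^n$ for a fixed Sylow subgroup $Q$; that is a statement about containment in Sylow subgroups, whereas the intersections $N\cap\langle H,ng\rangle$ vary with $n$ in a way your argument does not control. (That the identity holds on average over cosets is equivalent to part a) itself, so it cannot be invoked.)

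The second gap concerns part a), which is asserted for arbitrary finite $G$: an induction that peels off a minimal normal subgroup which is either a $p$-group or a $p'$-group only reaches $p$-separable groups. If $G$ is nonabelian simple, or more generally if its minimal normal subgroups are products of nonabelian simple groups of order divisible by $p$, your induction has no first step. This is precisely the division of labour between the two cited references: the $\{p,p'\}$-series argument yields part b) and part a) for $p$-solvable groups, while the general case of a) requires a separate and substantially harder argument that your scheme does not supply.
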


A first easy application of this result is a formula that expresses the number of $p$-elements in $G$ in terms of the orders of the subnormalizers of the cyclic subgroups of a Sylow $p$-subgroup of $G$.

\begin{lem} \label{form Omega subnor} Let $P$ be a Sylow $p$-subgroup of $G$. We have
\[
|\mathfrak{U}_p(G)|=\sum_{x \in P} \frac{|G|}{|S_G(x)|}.
\]
\end{lem}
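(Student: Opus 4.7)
The plan is to rewrite each summand $|G|/|S_G(x)|$ as something that depends only on the $G$-conjugacy class of $x$, and then collapse the sum over $x \in P$ into a sum over $G$-conjugacy classes of $p$-elements. Both pieces of Theorem \ref{form subnor theorem} part a) will be needed, and the key observation is that the appearance of $\alpha_G(\langle x\rangle)|N_G(\langle x\rangle)|$ in the subnormalizer formula is exactly what combines, via (\ref{alpha x}), to produce the centralizer of $x$.

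First I would reformulate the summand. By Theorem \ref{form subnor theorem} a) we have $|S_G(x)| = \alpha_G(\langle x \rangle)|N_G(\langle x \rangle)|$, and substituting (\ref{alpha x}) gives
\[
|S_G(x)| = \alpha_G(x)\, |C_G(x)|,
\]
so that
\[
\frac{|G|}{|S_G(x)|} = \frac{|x^G|}{\alpha_G(x)}.
\]
Both $|x^G|$ and $\alpha_G(x)$ are constant on the $G$-conjugacy class of $x$, which is the whole point of the rewriting.

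Next I would partition $\mathfrak{U}_p(G)$ into $G$-conjugacy classes. Since every $p$-element of $G$ lies in some Sylow $p$-subgroup and all Sylow $p$-subgroups are conjugate, the classes of $p$-elements are exactly the $G$-classes that meet $P$. For such a class $C = x^G$ with $x \in P$, the definition of $\alpha_G(x)$ says precisely that $|P \cap C| = \alpha_G(x)$. Hence
\[
\sum_{y \in P \cap C} \frac{|G|}{|S_G(y)|} = \alpha_G(x)\cdot\frac{|x^G|}{\alpha_G(x)} = |C|,
\]
and summing over all $G$-classes of $p$-elements gives $\sum_{x\in P}|G|/|S_G(x)| = \sum_C |C| = |\mathfrak{U}_p(G)|$.

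There is really no serious obstacle here; the only subtlety is not to confuse $\alpha_G(x)$ (conjugates of the element) with $\alpha_G(\langle x\rangle)$ (conjugates of the cyclic subgroup), and to apply (\ref{alpha x}) to bridge between the two formulations of Theorem \ref{form subnor theorem} a). Once $|G|/|S_G(x)|$ is recognized as $|x^G|/\alpha_G(x)$, the statement is a pure double-counting identity.
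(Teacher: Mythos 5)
Your proof is correct and is essentially the same as the paper's: both reduce $|G|/|S_G(x)|$ to $|x^G|/\alpha_G(x)$ via part a) of Theorem \ref{form subnor theorem} together with formula (\ref{alpha x}), and then conclude by the same double-counting of conjugacy classes of $p$-elements against their intersections with $P$. The only difference is the direction of presentation (you start from the subnormalizer side, the paper starts from the counting side), which is immaterial.
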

\begin{proof}
In the sum
\[
\sum_{x \in P} |x^G| 
\] 
every class of $p$-elements is involved and its contribution is repeated as many times as the cardinality $|x^G \cap P|=\alpha_G(x)$. Hence
\[
|\mathfrak{U}_p(G)| = \sum_{x \in P} \frac{|x^G|}{\alpha_G(x)}=\sum_{x \in P} \frac{|G|}{\alpha_G(x)|C_G(x)|}=\sum_{x \in P} \frac{|G|}{|S_G(x)|},
\]
by part \textit{a)} of Theorem \ref{form subnor theorem} and formula (\ref{alpha x}).
\end{proof}

We now turn to the proof of Theorem \ref{bound on Omega psolv}. 
Another fundamental tool that we are going to use in the proof is the following formula proved by Navarro and Rizo.

\begin{teo}[\cite{navarro:brauwiel}] \label{nav riz} Suppose that $P$ is a $p$-group acting on a $p'$-group $G$. Then
\[
|C_G(P)| = \left( \prod_{x \in P} \frac{|C_G(x)|}{|C_G(x^p)|^{1/p}} \right)^{\frac{p}{(p-1)|P|}}.
\]
\end{teo}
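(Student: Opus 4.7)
The plan is to argue by induction on $|P|$. The base case $|P|=1$ is immediate, with both sides equal to $|G|$. The second base case $|P|=p$ is equally transparent: then $P$ is cyclic, every nontrivial $x\in P$ generates $P$ so $|C_G(x)|=|C_G(P)|$, while $x^p=1$ gives $|C_G(x^p)|=|G|$; substituting, the inner product becomes $|C_G(P)|^{p-1}$ and the outer exponent $1/(p-1)$ recovers $|C_G(P)|$.

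For the inductive step I would pick a central subgroup $Z\le Z(P)$ of order $p$, which exists because every nontrivial $p$-group has nontrivial centre. Since $Z$ is central in $P$, the quotient $P/Z$ acts on the $p'$-group $C_G(Z)$, and $C_{C_G(Z)}(P/Z)=C_G(P)$. Applying the induction hypothesis to this action, taking $p$-th powers, and using $|P|=p\cdot|P/Z|$, the desired formula reduces to proving, for each $y\in P$, the per-coset identity
\[
\prod_{x\in yZ}|C_G(x)|
\;=\;
\frac{|C_G(Z)\cap C_G(y)|^{p}\cdot|C_G(y^p)|}{|C_G(Z)\cap C_G(y^p)|}.
\]
Note that the denominators $|C_G(x^p)|$ in the original right-hand side contribute uniformly $|C_G(y^p)|^p$ to each coset, because $x^p=y^p$ for every $x\in yZ$: indeed $Z\le Z(P)$ forces $(yz)^p=y^pz^p=y^p$ whenever $z\in Z$.

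The main obstacle is precisely this per-coset identity, which may be viewed as a Brauer--Wielandt-type statement for the cyclic group $Z\cong C_p$ acting on the $p'$-group $G$, but twisted by the commuting element $y\in P$. When $y=1$ it is a tautology, since for $z\in Z\setminus\{1\}$ one has $\langle z\rangle=Z$ and hence $|C_G(z)|=|C_G(Z)|$. For general $y$ I would attempt a proof by exploiting that $\langle y\rangle Z$ is an abelian subgroup of $P$ (because $Z\le Z(P)$) acting coprimely on $G$: one can decompose $G$ into orbits of this abelian action and count contributions to each side by stabilizer type, or equivalently invoke Glauberman-correspondence and character-theoretic tools that compare the fixed-point counts of the $p$ elements of the coset $yZ$ with those of $y^p$ and of $Z$. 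Once the twisted identity is in place, recombining the cosets yields the desired formula.
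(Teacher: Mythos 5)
The paper itself does not prove this statement --- it is imported verbatim from Navarro--Rizo \cite{navarro:brauwiel} --- so your attempt has to stand on its own. Your inductive framework is sound: the base cases are correct, the passage from $P$ to $P/Z$ acting on $C_G(Z)$ is legitimate (coprimality is inherited, $C_{C_G(Z)}(P/Z)=C_G(P)$, and $x^p=y^p$ for every $x\in yZ$ because $Z$ is central of exponent $p$), and your exponent bookkeeping correctly reduces the theorem to the per-coset identity you display. That identity is in fact true, so the strategy is viable in principle.

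The genuine gap is that the per-coset identity is left unproven, and it carries essentially all of the content of the theorem. Already for $y$ of order $p$ with $\langle y\rangle\neq Z$ it is exactly the classical Brauer--Wielandt formula $|C_G(A)|^p\,|G|=\prod_X|C_G(X)|$ for $A=\langle y\rangle\times Z\cong C_p\times C_p$, the product running over the $p+1$ subgroups $X$ of order $p$; for $y$ of larger order it is a further generalization. Neither of your two suggested routes closes this. Decomposing $G$ into orbits of $\langle y\rangle Z$ and counting ``by stabilizer type'' founders because the identity is multiplicative in the centralizer orders while fixed-point counts are additive over orbits: $|C_G(x)|$ is a sum of per-orbit fixed-point numbers, and a product of such sums does not factor orbit by orbit. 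The reduction that does work goes along an $\langle y\rangle Z$-invariant normal series of $G$, using the coprime-action identity $|C_G(a)|=|C_N(a)|\cdot|C_{G/N}(a)|$, which brings the identity down to irreducible $\mathbb{F}_q$-modules $V$ for the abelian group $A=\langle y\rangle Z$; there $A/\ker V$ is cyclic, each $|C_V(a)|$ equals $|V|$ or $1$ according to whether $a$ acts trivially on $V$, and the identity becomes an elementary case check. But that argument as stated only covers solvable $G$, and the nonabelian chief factors require a separate treatment (or the character-theoretic argument of \cite{navarro:brauwiel}); ``invoke Glauberman correspondence'' is not a proof step. As it stands, the proposal is a correct reduction of the theorem to an unproved lemma that subsumes Brauer--Wielandt, not a proof.
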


We can now prove Theorem \ref{bound on Omega psolv}. 

\begin{proof}[Proof of Theorem \ref{bound on Omega psolv}]
By Lemma \ref{form Omega subnor} we have that the $p$-Frobenius ratio of $G$ is the arithmetic mean of the ratios 
\[
\frac{|G|}{|S_G(x)|}
\] 
when $x$ runs across $P$. By the Arithmetic-Geometric Mean Inequality, we get
\begin{equation} \label{FrobRatio arit geom}
\frac{|\mathfrak{U}_p(G)|}{|P|} = \frac{1}{|P|} \left( \sum_{x \in P} \frac{|G|}{|S_G(x)|} \right) \geq \left( \prod_{x \in P} \frac{|G|}{|S_G(x)|} \right)^{1/|P|}.
\end{equation}
Since $G$ is $p$-solvable we can take a normal $\lbrace p,p' \rbrace$-series, whose set of $p'$-factors we call $\mathcal{M}$. Then, by part \textit{b)} of Theorem \ref{form subnor theorem}, we have for all $x \in P$
\[
\frac{|G|}{|S_G(x)|} = \frac{|G|/|P|}{\prod_{U/V \in \mathcal{M}}|C_{U/V}(Vx)|} = \prod_{U/V \in \mathcal{M}} \frac{|U/V|}{|C_{U/V}(Vx)|}.
\]
We insert this last term in (\ref{FrobRatio arit geom}) and swap the products to get
\begin{eqnarray*}
\frac{|\mathfrak{U}_p(G)|}{|P|} & \geq & \left( \prod_{U/V \in \mathcal{M}} \left( \prod_{x \in P}  \frac{|U/V|}{|C_{U/V}(xV)|} \right) \right)^{1/|P|} \\
& = & \left( \prod_{U/V \in \mathcal{M}} \left| \frac{U}{V} \right|^{|P|} \left( \prod_{x \in P}  \frac{1}{|C_{U/V}(xV)|} \right) \right)^{1/|P|}.
\end{eqnarray*}
Now for all $U/V \in \mathcal{M}$, $P$ is a $p$-group that acts on the $p'$-group $U/V$. We can then apply Theorem \ref{nav riz} and use the trivial inequality $|C_{U/V}((xV)^p)| \leq |U/V|$, so that we have
\begin{eqnarray*}
\prod_{x \in P}  \frac{1}{|C_{U/V}(xV)|} & = & \left( \prod_{x \in P} \frac{1}{|C_{U/V}((xV)^p)|^{1/p}} \right) \frac{1}{|C_{U/V}(P)|^{|P|(p-1)/p}} \\
& \geq & \left( \frac{1}{|U/V|^{|P|/p}}\right)\frac{1}{|C_{U/V}(P)|^{|P|(p-1)/p}}
\end{eqnarray*}
and so
\[
\frac{|\mathfrak{U}_p(G)|}{|P|} \geq \left( \prod_{U/V \in \mathcal{M}} \frac{|U/V|}{|C_{U/V}(P)|} \right)^{\frac{p-1}{p}} = \left( \frac{|G|}{|S_G(P)|} \right)^{1-\frac{1}{p}},
\]
again by part \textit{b)} of Theorem \ref{form subnor theorem}. 

Finally, we observe that for a Sylow $p$-subgroup $S_G(P)=N_G(P)$, so that
\[
\frac{|\mathfrak{U}_p(G)|}{|P|}  \geq \left( \frac{|G|}{|S_G(P)|} \right)^{1-\frac{1}{p}} = \left( \frac{|G|}{|N_G(P)|} \right)^{1-\frac{1}{p}} = n_p(G)^{1-\frac{1}{p}}.
\]
\end{proof}

It is worth mentioning that the bound in Theorem \ref{bound on Omega psolv} is asymptotically tight in the sense specified by the following example.

\begin{ese} \label{tight frob ratio}
Let $p$ be a prime and, for $n$ a positive integer, let $P$ be an elementary abelian group of order $p^n$. Moreover set $\mathcal{M}$ to be the set of the maximal subgroups of $P$. Choose a prime $q$ such that $p$ divides $q-1$. Then for any $M \in \mathcal{M}$ we have that $P/M \simeq C_p$ acts fixed point freely as a group of automorphisms on a cyclic group $\langle a_M \rangle \simeq C_q$. We denote the image of the generator $a_M$ under this action by $a_M^{xM}$,  for every $xM \in P/M$.

Since $\bigcap_{M \in \mathcal{M}} M =1$, it follows that $P$ acts faithfully on the direct product $N$ of the groups $\langle a_M \rangle$. To be more explicit the following map
\begin{align*}
P & \rightarrow Aut(N) \\ 
x & \mapsto \phi_x,
\end{align*}
where $\phi_x(a_M)=a_M^{xM}$, for all $M \in \mathcal{M}$ is an injective homomorphism.

We consider the semidirect product $G_n = N \rtimes P$. The normalizer of $P$ in $G_n$ is $C_N(P)P=P$, hence the number of Sylow $p$-subgroups of $G_n$ is 
\begin{equation}\label{npgn}
n_p(G_n)= |N| = q^{|\mathcal{M}|}=q^{\frac{p^n-1}{p-1}}.
\end{equation}
In order to count the number of $p$-elements in $G_n$ we use the equality
\[
| \mathfrak{U}_p(G_n) | = \sum_{x \in P}  \frac{n_p(G_n)}{\lambda_{G_n}(x)},
\]
which follows from Lemma \ref{form Omega subnor} and part \textit{a)} of Theorem \ref{form subnor theorem}.
We thus have to compute $\lambda_{G_n}(x)$, for $x \in P \setminus \lbrace 1 \rbrace$. Using again part \textit{a)} of Theorem \ref{form subnor theorem}, we have
\[
\lambda_{G_n}(x)= \frac{\alpha_{G_n}(x)n_p(G_n)}{|x^{G_n}|}.
\]
Now since $P$ is abelian and $G_n$ has a normal $p$-complement, we have $\alpha_{G_n}(x)=1$ and $|x^{G_n}|=|N|/|C_N(x)|$, so that $\lambda_{G_n}(x)=|C_N(x)|$.
Given $x \in P \setminus \lbrace 1 \rbrace$, the centralizer of $x$ in $N$ is generated by those $a_M$ such that $a_M^{xM}=a_M$.
Since $P/M$ acts fixed point freely on $\langle a_M \rangle$, this holds if and only if $x \in M$, hence
\[
C_N(x)= \langle a_M \ | \ x \in M \rangle.
\] 
The number of maximal subgroups in $P$ containing a fixed nontrivial element is $\frac{p^{n-1}-1}{p-1}$, and so
\[
|C_N(x)|=q^{\frac{p^{n-1}-1}{p-1}}.
\]
We can then calculate the $p$-Frobenius ratio of $G_n$
\begin{equation}
\begin{split}
\frac{|\mathfrak{U}_p(G_n)|}{|P|} &= \frac{1}{|P|} \sum_{x \in P} \frac{n_p(G_n)}{\lambda_{G_n}(x)}  \\
& = \frac{1}{p^n} \left( 1 + (p^n-1) \frac{q^\frac{p^{n}-1}{p-1}}{q^\frac{p^{n-1}-1}{p-1}} \right) \\
& = \frac{1}{p^n} + \frac{p^n-1}{p^n} q^{p^{n-1}}.
\end{split}
\end{equation}
By (\ref{npgn}) we have
\[
n_p(G_n)^{\frac{p-1}{p}}= q^{\frac{p^n-1}{p}}.
\]
We can now compare the two members of the inequality stated by Theorem \ref{bound on Omega psolv}. By considering the limit
\[
\lim_{n \rightarrow \infty} \frac{|\mathfrak{U}_p(G_n)|/|P|}{n_p(G_n)^{\frac{p-1}{p}}} = \lim_{n \rightarrow \infty} \left( \frac{1}{p^n q^{\frac{p^n-1}{p}}} + \frac{p^n-1}{p^n} \frac{q^{p^{n-1}}}{q^{\frac{p^n-1}{p}}} \right) = q^{1/p},
\]
we see that the $p$-Frobenius ratio of $G_n$ and the $\left( 1-\frac{1}{p} \right)$-th power of the number of Sylow $p$-subgroups have the same asymptotic behaviour.
\end{ese}

\section{The general case}

In this section we explain why inequality (\ref{conj lambda}) is sufficient for establishing Conjecture \ref{conj pfrobratio} and we prove Theorem \ref{red alm simp}. 

Let $P$ a Sylow $p$-subgroup of $G$. Since Lemma \ref{form Omega subnor} is true for any group, by applying the Arithmetic-Geometric Mean Inequality as in \ref{FrobRatio arit geom}, we get 
\[
\frac{|\mathfrak{U}_p(G)|}{|P|} \geq \left( \prod_{x \in P} \frac{|G|}{|S_G(x)|} \right)^{1/|P|}
\]
and, recalling Theorem \ref{form subnor theorem}, we can write
\[
\frac{|\mathfrak{U}_p(G)|}{|P|} \geq \left( \prod_{x \in P} \frac{n_p(G)}{\lambda_G(x)} \right)^{1/|P|}.
\]
A sufficient condition for (\ref{bound p el}) is then
\[
\left( \prod_{x \in P} \frac{n_p(G)}{\lambda_G(x)} \right)^{1/|P|} \geq n_p(G)^{\frac{p-1}{p}},
\]
that is
\begin{equation*} 
\left( \prod_{x \in P} \lambda_G(x) \right)^{1/|P|} \leq n_p(G)^{\frac{1}{p}},
\end{equation*}
which is inequality (\ref{conj lambda}).

\begin{oss} 
In \cite{gheri:degnil} it is proven that if $x$ is a $p$-element of $G$ which is not contained in the $O_p(G)$, then $\lambda_G(x)$ is at most $n_p(G)/(p+1)$.
Focusing on a single element, this is the best one can get.
Inequality (\ref{conj lambda}), if true, would give a better bound on average, as it states that the geometric mean of the number of Sylow $p$-subgroups containing an element of a Sylow $p$-subgroup is at most the $p$-th root of the total number of Sylow $p$-subgroups.
\end{oss}

\begin{oss}
The bound (\ref{conj lambda}), if true, is best possible in a strict sense. If we compute the terms of inequality (\ref{conj lambda}) for the groups $G_n$ defined in Example \ref{tight frob ratio}, an equality occurs.
\end{oss}

\begin{oss} \label{Op 1} For the proof of Theorem \ref{red alm simp}, we can assume $O_p(G)=1$. This is because if $N$ is a normal $p$-subgroup of $G$, then, for all $x \in \mathfrak{U}_p(G)$, we have that $\lambda_G(x)=\lambda_{G/N}(xN)$, and so (\ref{conj lambda}) holds for $G$ if and only if it holds for $G/N$.
\end{oss}

First of all, we can reduce (\ref{conj lambda}) to nonsolvable groups all of whose proper quotients are solvable (see Proposition \ref{reduction conj mns}). We need some technical lemmas, the first of whom is proved in \cite[Lemma 3.3]{gheri:unsesto}.

\begin{lem}\label{sp x cresce sui sottogruppi}
Let $H$ be a subgroup of $G$ and $x \in H$ be a $p$-element. Then
\[
\frac{\lambda_G(x)}{n_p(G)} \leq \frac{\lambda_H(x)}{n_p(H)}.
\]
Moreover, if $H \unlhd G$, then the equality holds.
\end{lem}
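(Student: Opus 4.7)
The plan is to partition $Syl_p(G)$ into orbits under the conjugation action of $H$ and to control, orbit by orbit, the fraction of Sylows of $G$ containing $x$. Fix an $H$-orbit $\mathcal{O}$ on $Syl_p(G)$, pick a representative $P\in\mathcal{O}$, and set $Q=H\cap N_G(P)$, so that $|\mathcal{O}|=|H|/|Q|$. The main computational step is to establish
\[
\#\bigl\{P'\in\mathcal{O}:x\in P'\bigr\}=\frac{|C_H(x)|\cdot|x^H\cap(P\cap H)|}{|Q|},
\]
via three observations: $x\in P^h$ iff $x^{h^{-1}}\in P$; since $x,h\in H$ this is equivalent to $x^{h^{-1}}\in P\cap H$; and the fibres of the map $h\mapsto x^{h^{-1}}$ from $H$ onto $x^H$ are the left $C_H(x)$-cosets, while distinct cosets $Qh$ give distinct conjugates $P^h$. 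Dividing by $|\mathcal{O}|$ and using $|x^H|=|H|/|C_H(x)|$ will then produce the clean orbit ratio
\[
\frac{\#\{P'\in\mathcal{O}:x\in P'\}}{|\mathcal{O}|}=\frac{|x^H\cap(P\cap H)|}{|x^H|}.
\]

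Next I would bound this ratio. Writing $K:=P\cap H$, a $p$-subgroup of $H$, Sylow's theorem places $K$ inside some $R'\in Syl_p(H)$, hence $|x^H\cap K|\leq|x^H\cap R'|$; because $H$ acts transitively on $Syl_p(H)$ and preserves $x^H$ setwise, $|x^H\cap R'|=|x^H\cap R|$ for any fixed $R\in Syl_p(H)$. Applying the same orbit calculation to the (single) $H$-orbit $Syl_p(H)$ identifies $|x^H\cap R|/|x^H|$ with $\lambda_H(x)/n_p(H)$. Summing the orbit inequality over all $H$-orbits on $Syl_p(G)$ then yields
\[
\lambda_G(x)=\sum_{\mathcal{O}}\#\{P'\in\mathcal{O}:x\in P'\}\leq\frac{\lambda_H(x)}{n_p(H)}\sum_{\mathcal{O}}|\mathcal{O}|=n_p(G)\cdot\frac{\lambda_H(x)}{n_p(H)},
\]
which is exactly the desired inequality.

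For the equality claim when $H\unlhd G$, I would invoke the classical fact that $P\cap H\in Syl_p(H)$ for every $P\in Syl_p(G)$: any $R\in Syl_p(H)$ extends to some $P_0\in Syl_p(G)$ with $R=P_0\cap H$, and then $G$-conjugation (which normalises $H$) sweeps $P_0$ over all of $Syl_p(G)$ while preserving the intersection pattern with $H$. Hence $K=P\cap H$ is itself a Sylow of $H$, so $|x^H\cap K|=|x^H\cap R|$ and every orbit-wise inequality above collapses to an equality. The one nonroutine point is the first displayed identity, essentially an orbit–stabiliser and fibre count; once it is in hand the remainder of the argument is Sylow's theorem together with the $H$-transitivity on $Syl_p(H)$.
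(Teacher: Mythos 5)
Your argument is correct, and it is worth noting that the paper itself does not reproduce a proof of this lemma: it cites Lemma 3.3 of the preprint \cite{gheri:unsesto}, and the surrounding machinery (Theorem \ref{form subnor theorem}a, which gives $\lambda_G(x)/n_p(G)=|S_G(x)|/|G|$) suggests the source argument runs through Casolo's subnormalizer formula, comparing the densities $|S_G(x)|/|G|$ and $|S_H(x)|/|H|$. Your route is different and entirely self-contained: a double count over each $H$-orbit $\mathcal{O}$ on $Syl_p(G)$ showing $\#\{P'\in\mathcal{O}:x\in P'\}/|\mathcal{O}|=|x^H\cap(P\cap H)|/|x^H|$, the Sylow bound $|x^H\cap(P\cap H)|\leq|x^H\cap R|$ for $R\in Syl_p(H)$ containing $P\cap H$, the identification of $|x^H\cap R|/|x^H|$ with $\lambda_H(x)/n_p(H)$ by running the same count on the single orbit $Syl_p(H)$, and the observation that normality of $H$ forces $P\cap H\in Syl_p(H)$, which collapses every orbitwise inequality to an equality. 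I checked the fibre counts (left $C_H(x)$-cosets for $h\mapsto x^{h^{-1}}$, right $N_H(P)$-cosets for $h\mapsto P^h$) and they are right. What your approach buys is independence from the subnormalizer apparatus and from the unpublished reference; what it costs is nothing here, since only the elementary combinatorics of conjugation is needed.
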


\begin{lem} \label{SNx}
Let $N$ be a normal subgroup of $G$, $P$ be a Sylow $p$-subgroup of $G$ and $x$ an element of $P$. Assume that $G=NP$. Then
\[
|S_G(x)|=|S_N(x)| |NP/N|
\]
\end{lem}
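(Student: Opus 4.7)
The plan is to reduce the identity to the two facts already available in the paper: the formula $|S_G(H)|=\lambda_G(H)|N_G(P)|$ from Theorem \ref{form subnor theorem}(a), applied to the cyclic $p$-subgroup $\langle x\rangle$, and the equality case of Lemma \ref{sp x cresce sui sottogruppi} that holds whenever one works inside a normal subgroup. The hypothesis $G=NP$ will intervene only at the very end, when we rewrite $|G/N|$ as $|NP/N|$.

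First I would observe that since $N\trianglelefteq G$ and $P\in\mathrm{Syl}_p(G)$, the intersection $P\cap N$ is a Sylow $p$-subgroup of $N$. Applying Theorem \ref{form subnor theorem}(a) both in $G$ and in $N$ to the cyclic $p$-subgroup $\langle x\rangle$ gives
\[
|S_G(x)|=\lambda_G(x)\,|N_G(P)|,\qquad |S_N(x)|=\lambda_N(x)\,|N_N(P\cap N)|.
\]
Since $N$ is normal, Lemma \ref{sp x cresce sui sottogruppi} yields the equality
\[
\frac{\lambda_G(x)}{n_p(G)}=\frac{\lambda_N(x)}{n_p(N)},
\]
so $\lambda_G(x)=\lambda_N(x)\,n_p(G)/n_p(N)$.

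Substituting this into the expression for $|S_G(x)|$ and using $n_p(G)=|G|/|N_G(P)|$ together with $n_p(N)=|N|/|N_N(P\cap N)|$, the factors $|N_G(P)|$ and $|N_N(P\cap N)|$ cancel and one is left with
\[
|S_G(x)|=\lambda_N(x)\,|N_N(P\cap N)|\cdot\frac{|G|}{|N|}=|S_N(x)|\cdot\frac{|G|}{|N|}.
\]
At this point I invoke the hypothesis $G=NP$, which is exactly what allows the replacement $|G|/|N|=|NP/N|$, giving the desired formula.

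I do not expect any real obstacle: the only subtlety is keeping the two applications of Theorem \ref{form subnor theorem}(a) coherent (the Sylow subgroup of $N$ to use is $P\cap N$, not $P$) and remembering that the hypothesis $G=NP$ is used purely to turn the intrinsic index $|G:N|$ into the form $|NP/N|$ written in the statement.
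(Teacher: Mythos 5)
Your argument is correct only in the special case $x\in N$, and that case is precisely the one the paper dispatches in a single line; the real content of the lemma is the case $x\in P\setminus N$, which your proof does not reach. The two ingredients you rely on both presuppose that $x$ lies in the subgroup under consideration: Theorem \ref{form subnor theorem}(a) applies to a $p$-subgroup $\langle x\rangle$ of $N$, and Lemma \ref{sp x cresce sui sottogruppi} is stated for ``$x\in H$ a $p$-element''. When $x\notin N$ the quantity $\lambda_N(x)$ (Sylow $p$-subgroups of $N$ containing $x$) is simply $0$, so the identity $|S_N(x)|=\lambda_N(x)|N_N(P\cap N)|$ is false, and the equality case of Lemma \ref{sp x cresce sui sottogruppi} is not available. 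Indeed, in this lemma $S_N(x)$ has to be read as the set $\lbrace a\in N \mid \langle x\rangle \sn \langle x,a\rangle\rbrace$, which is not a subnormalizer of a subgroup \emph{of} $N$, so Casolo's formula cannot be applied inside $N$ at all. A concrete instance where your computation breaks: $G=S_3$, $N=A_3$, $p=2$, $x=(12)$; here $\lambda_N(x)=0$ but $|S_G(x)|=2=|S_N(x)|\cdot|NP/N|$ with $|S_N(x)|=1$.

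The missing idea is how to handle the elements of $P$ outside $N$. The paper does this by induction on $|NP/N|=p^m$: if $N\langle x\rangle<G$ one passes to a maximal (hence normal, of index $p$) subgroup $M\supseteq N\langle x\rangle$ and applies the inductive hypothesis twice, and in the base case $G=N\langle x\rangle$ one argues directly that for $a\in N$ and $t\geq 1$ one has $ax^t\in S_G(x)$ if and only if $a\in S_G(x)$, because $\langle x,ax^t\rangle=\langle x,a\rangle$; this shows $S_G(x)$ is a union of $|N\langle x\rangle/N|$ cosets of $S_G(x)\cap N=S_N(x)$. Your cancellation computation is a clean way to recover the $x\in N$ case, but to complete the proof you must supply an argument of this second kind for $x\notin N$.
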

\begin{proof}
If $x \in N$, the thesis follows from the fact that $|S_G(x)|/|G|=|S_N(x)|/|N|$, which can be easily derived from part \textit{a)} of Theorem \ref{form subnor theorem}.

We work by induction on $m$, where $|NP/N|=p^m$. If $m=0$, then $G=N$ and there is nothing to prove. Suppose that $m > 0$ and $G \neq N \langle x \rangle$. Let $M$ be a maximal subgroup of $G$ containing $N \langle x \rangle$. Then, as $M \unlhd G$, by inductive hypothesis 
\[
|S_G(x)|=|S_M(x)|p=|S_N(x)|p^{m-1}p.
\]
Finally, if $G=N \langle x \rangle$, we observe that, given $a \in N$ and $t$ a positive integer, $ax^t \in S_G(x)$ if and only if $\langle x \rangle$ is subnormal in $\langle x, ax^t \rangle = \langle x, a \rangle $, that is if and only if $a \in S_G(x)$. It follows that 
\[
|S_G(x)|=|S_N(x)| |NP/N|.
\]
\end{proof}

\begin{lem} \label{lambda np}
Let $N$ be a normal subgroup of $G$, $P$ be a Sylow $p$-subgroup of $G$ and $x \in P$. Then
\[
\lambda_G(x)= \lambda_{\frac{G}{N}}(Nx) \lambda_{NP}(x).
\]
\end{lem}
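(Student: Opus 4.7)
The plan is to count Sylow $p$-subgroups of $G$ containing $x$ by sorting them according to their image in $G/N$. Let $\pi\colon G \to G/N$ denote the projection. For any Sylow $p$-subgroup $Q$ of $G$, the image $\pi(Q)=QN/N$ is a Sylow $p$-subgroup of $G/N$, and $Q$ is itself a Sylow $p$-subgroup of the full preimage $\pi^{-1}(\pi(Q))=QN$. In particular, fixing any Sylow $\bar R$ of $G/N$, the Sylow $p$-subgroups of $G$ with image $\bar R$ are exactly the Sylow $p$-subgroups of the preimage $\pi^{-1}(\bar R)$. Since $x\in Q$ forces $Nx\in \pi(Q)$, only the $\lambda_{G/N}(Nx)$ Sylows $\bar R$ of $G/N$ that contain $Nx$ can contribute. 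This gives the decomposition
\[
\lambda_G(x) \;=\; \sum_{\bar R \in \mathrm{Syl}_p(G/N),\, Nx\in\bar R} \lambda_{\pi^{-1}(\bar R)}(x).
\]

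The heart of the proof is then to show that each summand on the right equals $\lambda_{NP}(x)$, independently of $\bar R$. To see this, fix $\bar R$ containing $Nx$; since $x$ is a $p$-element of $\pi^{-1}(\bar R)$, we may pick a Sylow $R$ of $\pi^{-1}(\bar R)$ with $x\in R$, so that Lemma \ref{SNx} applies inside the group $\pi^{-1}(\bar R)=NR$ and yields $|S_{NR}(x)|=|S_N(x)|\,|NR/N|$. Combining this with part (a) of Theorem \ref{form subnor theorem}, we obtain
\[
\lambda_{\pi^{-1}(\bar R)}(x) \;=\; \frac{|S_{NR}(x)|}{|N_{NR}(R)|} \;=\; \frac{|S_N(x)|\,|NR/N|}{|N_{NR}(R)|}.
\]
Now $|NR/N|=|\bar R|=|NP/N|$, and $|N_{NR}(R)|=|N_{NP}(P)|$ because $NR$ is $G$-conjugate to $NP$ (all Sylow $p$-subgroups of $G/N$ being conjugate) and the normalizer of a Sylow $p$-subgroup has an order depending only on the ambient group up to isomorphism. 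Applying the same formula once more with $NP$ and $P$ in place of $NR$ and $R$ identifies the right-hand side with $\lambda_{NP}(x)$.

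Substituting this constant value back into the decomposition immediately gives
\[
\lambda_G(x) \;=\; \lambda_{G/N}(Nx)\cdot \lambda_{NP}(x),
\]
which is the claim. The only delicate point is the choice of the Sylow $R$ inside $\pi^{-1}(\bar R)$ so that Lemma \ref{SNx} is directly applicable; this is harmless because $x$ lies in $\pi^{-1}(\bar R)$ as a $p$-element and is therefore contained in at least one Sylow $p$-subgroup of that preimage.
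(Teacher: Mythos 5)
Your proof is correct and follows essentially the same route as the paper's: both decompose $\lambda_G(x)$ along the fibers of $Q \mapsto NQ/N$ and then use Lemma \ref{SNx} together with part (a) of Theorem \ref{form subnor theorem} to show each fiber has the constant size $\lambda_{NP}(x)$ (the paper phrases this as the independence of $\lambda_{NP}(x)$ from the chosen Sylow $P \ni x$, you phrase it via conjugacy of the preimages $\pi^{-1}(\bar R)$, which amounts to the same computation).
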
 
\begin{proof}
First of all we show that given a $p$-element $x$, the value $\lambda_{NP}(x)$ is independent of the particular Sylow $p$-subgroup $P$ containing $x$. By Theorem \ref{form subnor theorem} and Lemma \ref{SNx}, we have
\[
\lambda_{NP}(x)= \frac{|S_{NP}(x)|}{|N_{NP}(P)|} =\frac{|S_{N}(x)|}{|N_{NP}(P)|} \left| \frac{NP}{N} \right| = \frac{|S_{N}(x)|}{|N|} |n_p(NP)| .
\]
If $Q$ is another Sylow $p$-subgroup such that $x \in Q$, then of course $n_p(NP)=n_p(NQ)$, since $NP$ and $NQ$ are conjugated in $G$. Moreover $|S_N(x)|$ depends only on $N$ and $x$.

Now let $\Delta_G^x$ be the set of the Sylow $p$-subgroups of $G$ containing $x$. We define the map
\begin{equation}
\begin{split}
\Delta_G^x & \rightarrow \Delta_{\frac{G}{N}}^{xN} \\
Q & \mapsto NQ/N.
\end{split}
\end{equation}
For all $\tilde{Q} \in \Delta_G^x$, the fiber of $N\tilde{Q}/N \in  \Delta_{\frac{G}{N}}^{xN}$ is the set of Sylow $p$-subgroups $Q$ of $G$ containing $x$ and such that $NQ=N\tilde{Q}$, that is, $\Delta_{N \tilde{Q}}^{x}$.
Since we proved that $\lambda_{N\tilde{Q}}(x)$ is independent of $\tilde{Q}$, we have
\[
\lambda_G(x)= | \Delta_{G}^{x}| = \left| \Delta_{\frac{G}{N}}^{xN}\right| \left| \Delta_{NP}^{x} \right| =  \lambda_{\frac{G}{N}}(xN) \lambda_{NP}(x).
\]
\end{proof}

\begin{prop} \label{reduction conj mns}
A counterexample of minimal order to inequality (\ref{conj lambda}) is a nonsolvable group having a unique minimal normal subgroup $M$, which is nonsolvable, and such that $G=MP$, where $P$ is a Sylow $p$-subgroup of $G$.
\end{prop}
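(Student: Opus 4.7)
The plan is to take a minimal counterexample $G$ to inequality~(\ref{conj lambda}), fix a Sylow $p$-subgroup $P$, and extract the three listed properties. First, $G$ cannot be $p$-solvable (and hence is nonsolvable): indeed, the chain of inequalities appearing at the start of Section~3, combined with the proof of Theorem~\ref{bound on Omega psolv}, actually establishes~(\ref{conj lambda}) for every $p$-solvable group. By Remark~\ref{Op 1} we may moreover assume $O_p(G)=1$.

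The central step is to show that $G = NP$ for every nontrivial proper normal subgroup $N$ of $G$. Suppose instead that $N \neq 1$ and $NP \neq G$; then $G/N$ and $NP$ are both smaller than $G$, and so by minimality each satisfies~(\ref{conj lambda}). Applying Lemma~\ref{lambda np} one gets the factorisation
\[
\prod_{x \in P} \lambda_G(x) \;=\; \prod_{x \in P} \lambda_{G/N}(xN)\;\cdot\;\prod_{x \in P} \lambda_{NP}(x).
\]
Since the projection $P \to PN/N$ has fibres of size $|P \cap N|$ and $|PN/N|\cdot|P\cap N|=|P|$, the first factor equals $\bigl(\prod_{y\in PN/N}\lambda_{G/N}(y)\bigr)^{|P\cap N|}$, which is bounded by $n_p(G/N)^{|P|/p}$; the second is bounded by $n_p(NP)^{|P|/p}$. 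The final ingredient is the standard multiplicativity $n_p(G) = n_p(G/N)\cdot n_p(NP)$ (each Sylow of $G/N$ has the form $QN/N$ for some Sylow $p$-subgroup $Q$ of $G$, and the fibres of this map have size $n_p(NP)$). This upgrades the product of the two bounds to $n_p(G)^{|P|/p}$, contradicting the assumption that $G$ violates~(\ref{conj lambda}).

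Applying this reduction to any minimal normal subgroup $M$ of $G$ yields $G = MP$. For uniqueness, if $M'$ were a second minimal normal subgroup, minimality would force $M\cap M'=1$, and hence $M'$ would embed into $G/M \cong P/(M\cap P)$, which is a $p$-group; so $M'$ would be a normal $p$-subgroup of $G$, forcing $M' \leq O_p(G)=1$, a contradiction. Finally, $M$ cannot be a $p$-group (by $O_p(G)=1$), and if $M$ were elementary abelian of order prime to $p$, then the series $1 \leq M \leq G = MP$ would make $G$ a $p$-solvable group, contradicting the first paragraph. Hence $M$ is nonsolvable, and so is $G$.

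The main obstacle is the central reduction: one must recognise that Lemma~\ref{lambda np} meshes cleanly with both the Sylow identity $n_p(G)=n_p(G/N)\,n_p(NP)$ and the index identity $|PN/N|\cdot|P\cap N|=|P|$, in precisely the manner needed to produce the exponent $|P|/p$ demanded by~(\ref{conj lambda}). Once this bookkeeping is set up, uniqueness of $M$ and its nonsolvability fall out almost immediately from $O_p(G)=1$ together with the fact that a minimal counterexample is not $p$-solvable.
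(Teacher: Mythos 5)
Your proof is correct and follows essentially the same route as the paper: the reduction via Lemma~\ref{lambda np} combined with the identities $n_p(G)=n_p(G/N)\,n_p(NP)$ and $|PN/N|\cdot|P\cap N|=|P|$ is exactly the paper's computation, applied there to minimal normal subgroups. You merely make explicit a few points the paper leaves implicit --- the Sylow multiplicativity, the fact that the \emph{proof} (rather than the statement) of Theorem~\ref{bound on Omega psolv} yields~(\ref{conj lambda}) for $p$-solvable groups, and the replacement of the paper's appeal to ``the usual subdirect product argument'' by a direct embedding of a second minimal normal subgroup into the $p$-group $G/M$.
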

\begin{proof}
Let $G$ be a counterexample of minimal order to inequality \ref{conj lambda} and let $P$ be a Sylow $p$-subgroup of $G$. By Remark \ref{Op 1} we have that $O_p(G)=1$. By Theorem \ref{bound on Omega psolv}, $G$ is nonsolvable. We show that every proper quotient of $G$ is solvable. Let $M$ be a minimal normal subgroup of $G$. By Lemma \ref{lambda np}, we have
\begin{eqnarray*}
\prod_{x \in P} \lambda_G(x) & = & \prod_{x \in P} \lambda_{\frac{G}{M}}(Mx) \lambda_{MP}(x) \\
&=& \left( \prod_{xM \in \frac{PM}{M}} \lambda_{\frac{G}{M}}(Mx) \right)^{|P \cap M|} \left( \prod_{x \in P} \lambda_{MP}(x) \right). 	
\end{eqnarray*}
By the minimality of $G$, we have
\[
\prod_{xM \in \frac{PM}{M}} \lambda_{\frac{G}{M}}(Mx) \leq n_p \left( G/M \right)^{\frac{|PM/M|}{p}}.
\]
If $MP<G$, we can again assume that the inequality is true for $MP$ and so
\[
\prod_{x \in P} \lambda_G(x) \leq n_p\left( \frac{G}{M} \right)^{\frac{|PM/M|}{p}|P \cap M|} n_p(MP)^\frac{|P|}{p}=n_p(G)^\frac{|P|}{p}.
\]
Since this is true for every minimal normal subgroup of $G$, the usual subdirect product argument gives that $G$ has a unique minimal normal subgroup $M$, which is nonsolvable and such that $G=MP$.
\end{proof}

Referring to the notation of Lemma \ref{reduction conj mns}, $M$ is the direct product of simple groups permuted by $P$. The next easy lemma loosely bounds the number of $\langle x \rangle$-invariant Sylow $p$-subgroups of $M$, where $x \in P$, in terms of the action of $\langle x \rangle$ on the direct factors of $M$. 

\begin{lem} \label{num orb}
Let $M \unlhd G$ be a direct product of $k$ copies of a group $L$, $M=L_1 \times \dots \times L_k$, let $x \in G$ be an element that permutes the factors $L_i$ of $M$ and let $p$ be a prime number. Then the number of $\langle x \rangle$-invariant Sylow $p$-subgroups of $M$ is at most $n_p(L)^s$, where $s$ is the number of orbits of $\langle x \rangle$ on $\Delta= \lbrace L_1, \dots, L_k \rbrace$.
\end{lem}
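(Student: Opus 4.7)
The plan is to exploit the product structure of Sylow $p$-subgroups of $M$. Since $M = L_1 \times \cdots \times L_k$, every Sylow $p$-subgroup of $M$ has the form $Q = Q_1 \times \cdots \times Q_k$ with $Q_i \in \mathrm{Syl}_p(L_i)$, and conversely every such product is a Sylow $p$-subgroup of $M$. So the problem becomes counting the tuples $(Q_1,\dots,Q_k)$ for which $Q^x = Q$.

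Next I would translate $\langle x\rangle$-invariance into a condition on each orbit. Conjugation by $x$ induces an automorphism of $M$ which, by hypothesis, permutes the factors $L_i$; write $L_i^x = L_{\sigma(i)}$ for a permutation $\sigma$ of $\{1,\dots,k\}$. Because the factors $L_{\sigma(i)}$ are distinct and the product is direct, $Q^x = Q_1^x \times \cdots \times Q_k^x$ and the invariance condition $Q^x = Q$ is equivalent to $Q_i^x = Q_{\sigma(i)}$ for every $i$. Decompose $\{1,\dots,k\}$ into the $s$ orbits of $\sigma$; the invariance conditions on different orbits are independent, so the count of invariant Sylow $p$-subgroups factors as a product over orbits.

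Fix one orbit $\mathcal{O} = \{i_1,\sigma(i_1),\dots,\sigma^{r-1}(i_1)\}$ of length $r$. The relations $Q_{\sigma(i_j)} = Q_{i_j}^x$ show that $Q_{i_1}$ determines $Q_{i_j}$ for every $j$, and the cycle closes consistently iff $Q_{i_1}^{x^r} = Q_{i_1}$, i.e.\ iff $Q_{i_1}$ is fixed by the automorphism of $L_{i_1}$ induced by $x^r$. In particular the number of valid choices in this orbit is at most the total number of Sylow $p$-subgroups of $L_{i_1} \cong L$, namely $n_p(L)$. Multiplying over the $s$ orbits yields the bound $n_p(L)^s$.

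There is essentially no obstacle here; the only care needed is the clean translation between ``$\langle x\rangle$ permutes the $L_i$'' at the level of the ambient group $G$ and the coordinate-wise action on the product, which is legitimate because the $L_i$ are the direct factors of $M$ and $M \unlhd G$ ensures that conjugation by $x$ is an automorphism of $M$ mapping each $L_i$ to another direct factor.
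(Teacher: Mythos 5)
Your argument is correct and is essentially the paper's own proof: both decompose a Sylow $p$-subgroup of $M$ as $Q_1\times\dots\times Q_k$, observe that $\langle x\rangle$-invariance forces the components along each orbit to be determined by a single one, and conclude with at most $n_p(L)$ choices per orbit. Your additional remark about the cycle-closing consistency condition is a slight refinement but does not change the approach.
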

\begin{proof}
A Sylow $p$-subgroup $Q$ of $M$ is the direct product of $k$ Sylow $p$-subgroups of $L$, $Q = Q_1 \times \dots \times Q_k$. Suppose that $Q$ is normalized by $x$. If $L_i=L_j^{x^r}$, for $r \in \mathbb{Z}$, then $Q_i=Q_j^{x^r}$ and so one has at most $n_p(L)$ choices for each $\langle x \rangle$-orbit in $\Delta$. 
\end{proof}

We can now prove Theorem \ref{red alm simp}

\begin{proof}[Proof of Theorem \ref{red alm simp}]
Suppose that inequality (\ref{conj lambda}) is true for all finite almost simple groups and let $G$ be a counterexemple of minimal order. By Proposition \ref{reduction conj mns}, $G=MP$ where $P$ is a Sylow $p$-subgroup of $G$ and $M$ is the unique minimal normal subgroup
\[
M = L_1 \times \dots \times L_k, \ L_i \simeq L, \ \forall i \in \lbrace 1, \dots , k \rbrace.
\]
for some nonabelian simple group $L$. Moreover $P$ acts transitively on the set $\Delta = \lbrace L_1, \dots , L_k \rbrace$.
Since we are assuming the result for almost simple groups, we have $k>1$. 

Let $Q=P \cap M$. For any subgroup $Q \leq X \leq P$ we set $m_X$ to be the ratio 
\begin{equation} \label{mX}
m_X=\frac{|N_M(X)|}{|N_M(P)|}= \frac{n_p(G)}{n_p(MX)}.
\end{equation}
The last equality holds since $n_p(G)=[MP:N_{MP}(P)]=[M:N_M(P)]$ and $n_p(MX)=[MX:N_{MX}(X)]=[M:N_M(X)]$. Moreover observe that if $g \in N_M(P)$ and $x\in X$ then
\[
x^g=x[x,g] \in X(M \cap P)=XQ=X,
\]
and so $N_M(P) \leq N_M(X)$. 

With a slight abuse of notation, we denote with $\lambda_M(x)$ the number of Sylow $p$-subgroups in $M$ normalized by $x$ even for $x \notin M$. It is easy to check that $\lambda_M(x)=\lambda_{M\langle x \rangle}(x)$.

Let $H_0=N_P(L_1)$ be the stabilizer of $L_1$ in the action of $P$ on $\Delta$. Since $P$ is transitive on $\Delta$, $H_0 \neq P$. Choose now a maximal subgroup $H$ of $P$ containing $H_0$. Since $H$ is normal in $P$ and the stabilizers of the subgroups $L_i$ are all conjugated in $P$, we have that $H$ contains all of them. It follows that every element $x \in P \setminus H$ has at most $k/p$ orbits on $\Delta$ and so by Lemma \ref{num orb} 
\begin{equation}\label{orbits}
\lambda_M(x) \leq n_p(L)^{\frac{k}{p}}.
\end{equation}

We now consider separately the elements inside and outside $H$. As for the elements inside $H$, since $MH$ is normal in $G$, using Lemma \ref{sp x cresce sui sottogruppi}, we get
\begin{equation*}
\begin{split}
\prod_{x \in H} \lambda_G(x) &= \prod_{x \in H} \frac{n_p(G)}{n_p(MH)}\lambda_{MH}(x) \\
&= \prod_{x \in H} m_H \lambda_{MH}(x)=(m_H)^{|H|} \prod_{x \in H} \lambda_{MH}(x). 
\end{split}
\end{equation*}
Since $H$ is a Sylow $p$-subgroup of $MH$ and inequality (\ref{conj lambda}) holds for $MH<G$ we get
\begin{equation} \label{in H}
\begin{split}
\prod_{x \in H} \lambda_G(x) & = (m_H)^{|H|} \prod_{x \in H} \lambda_{MH}(x) \\
& \leq (m_H)^{|H|} n_p(MH)^\frac{|H|}{p}=m_H^{\frac{p-1}{p}|H|} n_p(G)^\frac{|H|}{p},
\end{split}
\end{equation}
where we applied (\ref{mX}).

Now we turn our attention on elements in $P \setminus H$. Let $\mathcal{T}$ be a set of representatives for the right cosets of $Q$ in $P$ that are not contained in $H$. The cardinality of $\mathcal{T}$ is then 
\[
|\mathcal{T}|=[P:Q]-[H:Q]=\frac{|P|-|H|}{|Q|}=(p-1) \frac{|H|}{|Q|}.
\]
We have, by Lemma \ref{sp x cresce sui sottogruppi}
\begin{eqnarray*}
\prod_{x \in P \setminus H} \lambda_G(x) & = & \prod_{x \in \mathcal{T}} \prod_{g \in Q} \lambda_G(gx) \leq \prod_{x \in \mathcal{T}} \left( \prod_{g \in Q} m_{Q\langle gx \rangle} \lambda_{M\langle gx \rangle}(gx) \right) \\
&=& \prod_{x \in \mathcal{T}} \left( m_{Q\langle x \rangle}^{|Q|} \prod_{g \in Q} \lambda_{M}(gx) \right).
\end{eqnarray*}
Now the elements $gx$ in the previous product are not in $H$ and so by (\ref{orbits})
\begin{equation} \label{prod lambda outside H}
\begin{split}
\prod_{x \in P \setminus H} \lambda_G(x) & \leq \left( \prod_{x \in \mathcal{T}} m_{Q\langle x \rangle} \right)^{|Q|} n_p(L)^{\frac{k}{p}|Q||\mathcal{T}|}\\ 
& = \left( \prod_{x \in \mathcal{T}} m_{Q\langle x \rangle} \right)^{|Q|} n_p(M)^{\frac{p-1}{p}|H|}.
\end{split}
\end{equation}

We now want to evaluate the product $\prod_{x \in \mathcal{T}} m_{Q\langle x \rangle}$. In the following we use the bar notation for the quotients modulo $Q$. If $R=N_M(Q)$ we have a coprime action of $\bar{P}$ on the $p'$-group $\bar{R}$. We apply Theorem \ref{nav riz} to this action and get
\begin{equation*}
|C_{\bar{R}}(\bar{P})|^{|\bar{P}|\frac{p-1}{p}} = \prod_{x \in \bar{P}} \frac{|C_{\bar{R}}(x)|}{|C_{\bar{R}}(x^p)|^{1/p}}.
\end{equation*}
Separating the elements inside $\bar{H}$ and those outside $\bar{H}$ and applying twice Theorem \ref{nav riz}, we get
\begin{equation*}
\begin{split}
|C_{\bar{R}}(\bar{P})|^{|\bar{P}|\frac{p-1}{p}} & = \left( \prod_{x \in \bar{H}} \frac{|C_{\bar{R}}(x)|}{|C_{\bar{R}}(x^p)|^{1/p}} \right) \left( \prod_{x \in \bar{P} \setminus \bar{H}} \frac{|C_{\bar{R}}(x)|}{|C_{\bar{R}}(x^p)|^{1/p}} \right) \\
& = |C_{\bar{R}}(\bar{H})|^{|\bar{H}|\frac{p-1}{p}} \left( \prod_{x \in \bar{P} \setminus \bar{H}} \frac{|C_{\bar{R}}(x)|}{|C_{\bar{R}}(x^p)|^{1/p}} \right).
\end{split}
\end{equation*}
Using the bound $|C_{\bar{R}}(x^p)| \leq |\bar{R}|$ and the fact that $|\bar{P} \setminus \bar{H}|=(p-1) \left| \bar{H}\right|$, we have
\begin{equation} \label{prodCRx}
\begin{split}
\prod_{x \in \bar{P} \setminus \bar{H}} |C_{\bar{R}}(x)| & = |C_{\bar{R}}(\bar{P})^{|\bar{P}|\frac{p-1}{p}} |C_{\bar{R}}(\bar{H})|^{-|\bar{H}|\frac{p-1}{p}} \left( \prod_{x \in \bar{P} \setminus \bar{H}} |C_{\bar{R}}(x^p)|^{1/p} \right) \\
 &\leq |C_{\bar{R}}(\bar{P})|^{|\bar{P}|\frac{p-1}{p}} |C_{\bar{R}}(\bar{H})|^{-|\bar{H}|\frac{p-1}{p}} |\bar{R}|^{|\bar{H}|\frac{p-1}{p}}.
\end{split} 
\end{equation}
We now observe that if $X$ is a subgroup of $P$ containing $Q$, then $Q=X \cap M$, so that $N_M(X) \leq N_M(Q)=R$ and we have
\begin{equation} \label{NNX CRX}
\overline{N_M(X)}=C_{\bar{R}}(\bar{X}).
\end{equation}
and since $Q \leq N_M(X)$ 
\[
m_X=\frac{|N_M(X)|}{|N_M(P)|} = \frac{\left| \overline{N_M(X)} \right|}{\left| \overline{N_M(P)} \right|} = \frac{|C_{\bar{R}}(\bar{X})|}{|C_{\bar{R}}(\bar{P})|}
\]
Hence, for all $x \in \mathcal{T}$,
\[
m_{Q\langle x \rangle} = \frac{\left| C_{\bar{R}}(xQ) \right|}{\left| C_{\bar{R}}(\bar{P}) \right|}.
\]
Going back to our product and using (\ref{prodCRx}), we then get
\begin{equation*} 
\begin{split}
\prod_{x \in \mathcal{T}} m_{Q\langle x \rangle} & = \prod_{x \in \mathcal{T}} \frac{|C_{\bar{R}}(xQ)|}{|C_{\bar{R}}(\bar{P})|} =\prod_{x \in \bar{P} \setminus \bar{H}} \frac{|C_{\bar{R}}(x)|}{|C_{\bar{R}}(\bar{P})|} \\
& \leq |C_{\bar{R}}(\bar{P})|^{-|\bar{P}|\frac{p-1}{p}} |C_{\bar{R}}(\bar{P})|^{|\bar{P}|\frac{p-1}{p}} |C_{\bar{R}}(\bar{H})|^{-|\bar{H}|\frac{p-1}{p}} |\bar{R}|^{|\bar{H}|\frac{p-1}{p}} \\
& = |C_{\bar{R}}(\bar{H})|^{-|\bar{H}|\frac{p-1}{p}} |\bar{R}|^{|\bar{H}|\frac{p-1}{p}}.
\end{split}
\end{equation*}

We now remove the bar notation using the definition of $R$ and (\ref{NNX CRX}),

\begin{equation*}
\begin{split}
\prod_{x \in \mathcal{T}} m_{Q\langle x \rangle} & = \left( \frac{|N_M(H)|}{|Q|} \right)^{-\frac{|H|}{|Q|}\frac{p-1}{p}}  \left( \frac{|N_M(Q)|}{|Q|} \right)^{\frac{|H|}{|Q|}\frac{p-1}{p}} \\
& = \left( \frac{|N_M(Q)|}{|N_M(H)|} \right)^{\frac{|H|}{|Q|}\frac{p-1}{p}}.
\end{split}
\end{equation*}

Using this bound in inequality (\ref{prod lambda outside H}) we get
\begin{equation*} 
\begin{split}
\prod_{x \in P \setminus H} \lambda_G(x) & \leq \left( \prod_{x \in \mathcal{T}} m_{Q\langle x \rangle} \right)^{|Q|} n_p(M)^{\frac{p-1}{p}|H|}  \\
& \leq \left( \frac{|N_M(Q)|}{|N_M(H)|} \right)^{\frac{p-1}{p}|H|} n_p(M)^{\frac{p-1}{p}|H|} \\
& \leq \left( \frac{|N_M(Q)|}{|N_M(H)|}  n_p(M)\right)^{\frac{p-1}{p}|H|},
\end{split}
\end{equation*}
and since $Q$ is a Sylow $p$-subgroup of $M$,
\begin{equation} \label{not in H}
\begin{split}
\prod_{x \in P \setminus H} \lambda_G(x)  \leq \left( \frac{|N_M(Q)|}{|N_M(H)|}  \frac{|M|}{|N_M(Q)|}\right)^{\frac{p-1}{p}|H|} = \left(  \frac{|M|}{|N_M(H)|}\right)^{\frac{p-1}{p}|H|}
\end{split}
\end{equation}

By combining (\ref{in H}) and (\ref{not in H}) we obtain
\begin{equation*}
\begin{split}
\prod_{x \in P} \lambda_G(x) & = \left( \prod_{x \in H} \lambda_G(x) \right)\left( \prod_{x \in P \setminus H} \lambda_G(x) \right) \\
& \leq \left( m_H^{\frac{p-1}{p}|H|} n_p(G)^\frac{|H|}{p} \right) \left(  \frac{|M|}{|N_M(H)|}\right)^{\frac{p-1}{p}|H|} \\
& = n_p(G)^\frac{|H|}{p}  \left(  m_H \frac{|M|}{|N_M(H)|}\right)^{\frac{p-1}{p}|H|}.
\end{split}
\end{equation*}
Finally, recalling (\ref{mX}) and the fact that $|H|=|P|/p$ we get
\begin{equation*}
\begin{split}
\prod_{x \in P} \lambda_G(x) & \leq n_p(G)^\frac{|H|}{p}  \left(  \frac{|N_M(H)|}{|N_M(P)|} \frac{|M|}{|N_M(H)|}\right)^{\frac{p-1}{p}|H|} \\
&= n_p(G)^\frac{|H|}{p} n_p(G)^{\frac{p-1}{p}|H|} = n_p(G)^\frac{|P|}{p}
\end{split}
\end{equation*}
which is against the fact that $G$ is a counterexample.
\end{proof}

\subsection*{Acknowledgements} 

This article is part of the author’s PhD thesis, which was written under the great supervision of Carlo Casolo, whose contribution to this work was essential. 

Thanks are also due to Francesco Fumagalli and Silvio Dolfi for his valuable comments and suggestions.

This work was partially funded by the Istituto Nazionale di Alta Matematica ``Francesco Severi" (Indam).

\end{document}